\definecolor{darkgreen}{rgb}{0.0, 0.7, 0.0}
\definecolor{purple}{rgb}{0.5, 0.0, 0.5}
\definecolor{red}{rgb}{0.8, 0.2, 0.0}
\newtheorem{thm}{Theorem}[section]
\newtheorem{bthm}{Theorem}
\newtheorem{bcor}{Corollary}
\newtheorem{lemma}[thm]{Lemma}
\newtheorem{claim}[thm]{Claim}
\numberwithin{equation}{section}
\theoremstyle{definition}
\newtheorem{defi}[thm]{Definition}
\theoremstyle{remark}
\newtheorem{remark}[thm]{Remark}
\newcommand{\Pic}{\operatorname{Pic}}
\DeclareMathOperator{\Hom}{{Hom}}
\DeclareMathOperator{\Ext}{{Ext}}
\def \Im{{\rm Im}}
\def \PP{\mathbb{P}}
\def \ZZ{\mathbb{Z}}
\def \F{\mathcal F}
\def \L{\mathcal L}
\def \E{\mathcal E}
\def \G{\mathcal G}
\def\O{\mathcal O}
\def\M0{\mathcal M^0}
\DeclareMathOperator{\Ker}{{Ker}}
\begin{document}

\title[On the existence of Ulrich vector bundles on some irregular surfaces]{On the existence of Ulrich vector bundles on some irregular surfaces}

\author[A.F. Lopez]{Angelo Felice Lopez*}

\thanks{* Research partially supported by  PRIN ``Geometria delle variet\`a algebriche'' and GNSAGA-INdAM}

\address{\hskip -.43cm Dipartimento di Matematica e Fisica, Universit\`a di Roma
Tre, Largo San Leonardo Murialdo 1, 00146, Roma, Italy. e-mail {\tt lopez@mat.uniroma3.it}}

\thanks{{\it Mathematics Subject Classification} : Primary 14J60. Secondary 14J27, 14J29.}

\begin{abstract} 
We establish the existence of rank two Ulrich vector bundles on surfaces that are either of maximal Albanese dimension or with irregularity $1$, under many embeddings. In particular we get the first known examples of Ulrich vector bundles on irregular surfaces of general type. Another consequence is that every surface such that either $q \le 1$ or $q \ge 2$ and its minimal model has rank one, carries a simple rank two Ulrich vector bundle. 
\end{abstract}

\maketitle

\section{Introduction}
\label{intro}

Let $X \subseteq \PP^N$ be a smooth projective variety. There are two well-known and intertwined classical ways to study the geometry of $X$, one based on the behavior of its subvarieties and the other one on the behavior of vector bundles on $X$. The latter point of view often asks for vector bundles as simple as possible, for example with few non-vanishing cohomology groups, such as in Horrocks' theorem. It is in this line of thought that Ulrich vector bundles enter the picture: a vector bundle $\E$ on $X$ is said to be Ulrich if $H^i(\E(-p))=0$ for all $i \ge 0$ and $1 \le p \le \dim X$. Ulrich vector bundles have been extensively studied in recent years, due also to their relationship with other notions such as determinantal representation, Chow forms, Clifford algebras, and so on (see for example \cite{es, b1,  ckm}). On the other hand, the basic existence question of Ulrich vector bundles on $X$ is still open even in dimension two. In the case of surfaces we now know that several classes of surfaces do carry an Ulrich vector bundle, for example K3 surfaces \cite{afo, f}, abelian surfaces \cite{b2}, bielliptic surfaces \cite{b1} and surfaces with $p_g=0, q=1$ \cite{c4, c4e}, Enriques surfaces \cite{b3, c1, c1e, bn}, del Pezzo surfaces \cite{es, ch, b1, c3, pt}, several regular surfaces \cite{c1, c1e, c2}, hypersurfaces and complete intersections \cite{hub} and several ruled surfaces \cite{acm, b1}. 

We observe that, aside from the mentioned cases, not so many results are known for the existence of Ulrich vector bundles on irregular surfaces and, as far as we know, no {\it explicit} result is known for irregular surfaces of general type. We will give below, in Theorem \ref{main} and Corollary \ref{main2}, many such examples.

In several cases the main idea is to construct a vector bundle $\E$ on the surface via a zero-dimensional Cayley-Bacharach subscheme. One strong condition is given by Riemann-Roch since one has that $\chi(\E(-p))=0$ for $p=1, 2$: the Chern classes have to satisfy two conditions \cite[Prop.~2.1]{c1}. Now this approach works directly when some suitable twist of the vector bundle has no sections. On the other hand, a beautiful idea of Faenzi \cite{f} allows to deal with many other cases: first perform an elementary transformation at a point to lower the dimension of the space of sections and then deform to get a vector bundle. This method and the necessary conditions to succeed were clearly outlined in \cite{f} and \cite{c2}. 

In the study of a smooth irregular surface $S$, an important tool is given by its Albanese map. Recall that the Albanese variety of $S$ is the abelian variety ${\rm Alb}(S) = H^0(S,\Omega^1_S)^* /  H^1(S, \mathbb Z)$ and that integration of one-forms determines a morphism, the Albanese mapping,  $a: S \to {\rm Alb}(S)$. In particular $S$ is said to be of {\it maximal Albanese dimension} if $\dim a(S)=2$.

\eject

Using the same kind of ideas in \cite{f} and \cite{c2}, in this paper we consider irregular surfaces and prove the following:

\begin{bthm}
\label{main}
Let $S$ be a smooth irreducible surface such that either $S$ is of maximal Albanese dimension or $q(S)=1$. Let $H$ be a very ample divisor on $S$ satisfying:
\begin{itemize}
\item [(a)] $H^1(\O_S(H))=H^2(\O_S(H))=0$

\hskip -1.7cm and, when $\chi(\O_S) \ge 1$, also

\item [(b)] $H^2 \ge H \cdot K_S + 2 q(S)-2$, and
\item [(c)] $H^0(\O_S(2K_S-H))=0$.
\end{itemize}
Then there exists a rank two Ulrich vector bundle $\E$ for the pair $(S,H)$. 

Furthermore $\E$ is simple if either $\chi(\O_S) \ge 1$ or $\chi(\O_S) = 0$ and $q(S) \ge 2$ or $\chi(\O_S) = 0, q(S) =1$, $h^0(\O_S(-K_S)) = 0$ and $h^0(\O_S(H-K_S)) \le h^0(\O_S(H))$.
\end{bthm}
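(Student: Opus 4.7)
My plan is to follow the Faenzi--Casnati construction sketched in the introduction: construct a rank two sheaf $\F$ via Serre's construction from a Cayley--Bacharach zero-dimensional subscheme, then (if $\F$ is not already Ulrich) perform an elementary transformation at a general point to trim the space of sections, and finally deform to a locally free Ulrich bundle $\E$. The novelty over the regular case is that $\Pic^0(S)$ provides extra room: a general twist by $P \in \Pic^0(S)$ allows the use of Green--Lazarsfeld generic vanishing (when $S$ has maximal Albanese dimension) or of the Leray spectral sequence for the Albanese morphism to an elliptic curve (when $q(S)=1$). Starting from the numerics, Riemann--Roch applied to a rank two bundle $\E$ shows that the Ulrich conditions $\chi(\E(-H)) = \chi(\E(-2H)) = 0$ force $H \cdot c_1(\E) = 3H^2 + H\cdot K_S$ and $\chi(\E) = 2H^2$, pointing to the candidate $c_1(\E) = 3H + K_S + P$ with $P \in \Pic^0(S)$ general, and $c_2(\E)$ then determined.

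Setting $L := 3H + K_S + P$, I would find a zero-dimensional subscheme $Z \subset S$ of length $c_2(\E)$, chosen general and in Cayley--Bacharach position with respect to $K_S + L$; hypothesis (a) and the very ampleness of $H$ ensure $K_S+L$ is positive enough for such $Z$ to exist. Serre's construction then yields
\[
0 \to \O_S \to \F \to \I_Z \otimes \O_S(L) \to 0,
\]
with $\F$ rank two and locally free near $Z$. Twisting by $\O_S(-kH)$ for $k = 1, 2$ and running through the resulting long exact sequences, the cohomology of $\F(-kH)$ reduces to $H^*(\O_S(-kH))$ and $H^*(\I_Z \otimes \O_S(L-kH))$. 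The first is controlled by Serre duality and (a); the section terms of the second vanish by the generality of $Z$; the higher cohomology groups are handled by (b), (c) together with generic vanishing in the maximal Albanese dimension case, or by push-forward to the elliptic Albanese in the $q(S) = 1$ case.

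If the resulting $\F$ still carries an unwanted section in $\F(-H)$, following Faenzi I would perform an elementary transformation $0 \to \F' \to \F \to k(x) \to 0$ at a general point $x \in S$ to trim $h^0$ by one, iterate as needed, and then deform $\F'$ inside the moduli space of rank two sheaves with the same Chern classes. The deformation obstructions lie in traceless $\Ext^2$ groups, whose vanishing follows from the hypotheses and the explicit Serre-type description of $\F'$; a general deformation $\E$ is then locally free and Ulrich for $(S,H)$.

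For the simplicity statement, I would apply $\Hom(-,\E)$ to the defining extension and analyze the resulting long exact sequence. The case-by-case hypotheses ($\chi(\O_S) \ge 1$; or $\chi(\O_S)=0$ with $q(S) \ge 2$; or $\chi(\O_S)=0,\ q(S)=1$ together with the vanishings on $h^0(\O_S(-K_S))$ and $h^0(\O_S(H-K_S))$) are precisely tailored, via Serre duality on the ingredient sheaves, to force the relevant $\Hom$ and $\Ext^1$ spaces to have their expected dimension, giving $h^0(\End \E) = 1$. The main obstacle throughout will be the higher cohomology vanishing in the Serre step, specifically for $\I_Z \otimes \O_S(L-2H) = \I_Z \otimes \O_S(H + K_S + P)$, where Kodaira vanishing is unavailable when $\chi(\O_S) = 0$: success hinges on the clever choice of $P \in \Pic^0(S)$ combined with either generic vanishing or the elliptic fibration structure, which is exactly why the hypothesis of the theorem bifurcates into the maximal Albanese dimension and the $q(S)=1$ cases.
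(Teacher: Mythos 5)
Your overall architecture does match the paper's: a Serre construction from a Cayley--Bacharach scheme, a general twist in $\Pic^0(S)$ handled by generic vanishing in the maximal Albanese dimension case or by pushing forward along the Albanese map to the elliptic curve when $q(S)=1$, then elementary transformations at general points followed by deformation (Artamkin) to trim $h^0$ one section at a time, with the same numerology $c_1(\E) = 3H + K_S + P$, $P \in \Pic^0(S)$ general (the paper takes $P = 2\eta$, so that $\E^*(2H+K_S)$ becomes the $(-2\eta)$-twist of the same bundle and both vanishings $H^0(\E(-H)) = H^0(\E^*(2H+K_S)) = 0$ can be tracked symmetrically in $\pm\eta$ through every elementary transformation and deformation step --- a bookkeeping point your sketch leaves implicit).

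However, there is a genuine gap at the very first step, and it is not cosmetic. You propose to take $Z$ of length $c_2(\E)$ ``general and in Cayley--Bacharach position with respect to $K_S+L$'' with $L = 3H+K_S+P$, asserting that the positivity of $K_S+L$ ensures such $Z$ exists. This is backwards: positivity makes the CB condition \emph{harder} to satisfy, not easier. Here $\ell(Z) = c_2(\E) = \frac{5}{2}H^2 + \frac{3}{2}H\cdot K_S + 2\chi(\O_S)$, which by Riemann--Roch is well below $h^0(\O_S(K_S+L))$ (of size roughly $6H^2 + \cdots$), so a general scheme of that length imposes independent conditions on $|K_S+L|$; removing a point then always gains a section missing it, and a general $Z$ is \emph{never} CB at this length. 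Only quite special configurations can be, at which point your later appeals to ``the generality of $Z$'' for the vanishing of the section terms contradict the special position you would need. The paper resolves exactly this tension by performing the Serre construction at the minimal twist: it takes $c_1(\F) = H - K_S$, so the CB system is $|K_S + c_1(\F)| = |H|$ itself, and CB is achieved by a specific configuration, namely $Z$ equal to $N+1 = h^0(\O_S(H))$ general points on a smooth curve $C \in |H|$: any section of $|H|$ through $N$ of them would cut a hyperplane through $N$ general points of $C \subset \PP^{N-1}$, hence restricts to zero on $C$ and vanishes at the remaining point. Only afterwards does one twist by $K_S + \eta$ (and finally by $H$) to reach the Ulrich range. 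This points-on-a-hyperplane-curve choice powers essentially every downstream step: $h^0(\I_{Z/S}(H)) = 1$, the vanishings $h^0(\I_{Z/C}(H \pm \eta)) = 0$ and $H^0(\I_{Z/S}(K_S)) = 0$ (this is where hypothesis (b) enters, in the form $N+1 \ge p_g(S)$), the equality $\dim\Ext^2(\F,\F) = p_g(S)$ needed for the deformation step (proved by restricting to $C$ and using (c)), and the simplicity of $\F$ via $h^0(\F) = 1$. Without relocating the CB condition to $|H|$ and using this special $Z$, neither the construction nor the vanishings in your outline get off the ground.
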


As far as we know, aside from the case of curves, it is still an open question whether every smooth projective variety carries, in a suitable embedding, an Ulrich vector bundle. In the case of complex surfaces, Coskun and Huizenga \cite[Thm.~1.2]{cohu} show the existence of Ulrich bundles of rank two in a sufficiently ample embedding. 

We prove existence for surfaces, in a suitable embedding, in many cases including the rank one case.

\begin{bcor}
\label{main2}
Let $S$ be a smooth irreducible surface such that either $q(S) \le 1$ or $q(S) \ge 2$ and its minimal model $S_0$ has $\Pic(S_0) \cong \ZZ$. Then there exists a simple rank two Ulrich vector bundle $\E$ for the pair $(S,H)$, where $H$ is a suitable very ample divisor. 
\end{bcor}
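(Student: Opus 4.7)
The plan is to split into three cases by $q(S)$, applying Theorem \ref{main} in Cases 2 and 3 and falling back on the literature in Case 1. Throughout, for a fixed very ample $H_0$ on $S$, I would take $H = mH_0$ for $m \gg 0$: Serre vanishing gives condition (a), the inequality $(2K_S - H)\cdot H_0 < 0$ gives (c), and quadratic-versus-linear growth in $m$ gives (b).

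\textbf{Case 1: $q(S) = 0$.} Here neither alternative in the hypothesis of Theorem \ref{main} holds. For existence I would invoke \cite{cohu} in a sufficiently ample embedding, or the dedicated constructions \cite{c1, c1e, c2} for various classes of regular surfaces. A non-simple rank two Ulrich bundle splits as a direct sum of Ulrich line bundles, which can be excluded by a suitable choice of polarization, so simplicity can be arranged.

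\textbf{Case 2: $q(S) = 1$.} Theorem \ref{main} applies and produces a rank two Ulrich bundle with $H$ chosen as above. Simplicity is immediate when $\chi(\O_S) \ge 1$; for the subcase $p_g = 0, q = 1$ (equivalently $\chi(\O_S)=0$) I would appeal to the dedicated treatment in \cite{c4, c4e}.

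\textbf{Case 3: $q(S) \ge 2$ with $\rho(S_0) = 1$.} The key step is to show that $S$ is of maximal Albanese dimension, so that Theorem \ref{main} applies. Since this is a birational invariant it suffices to work with $S_0$: were its Albanese morphism not of maximal rank, after Stein factorization it would yield a fibration onto a smooth curve $C$ whose Jacobian surjects onto $\mathrm{Alb}(S_0)$, forcing $g(C)\ge q(S_0)\ge 2$, and producing a fiber class $F\in\NS(S_0)$ with $F^2=0$ linearly independent from any ample class---contradicting $\rho(S_0)=1$. Once maximal Albanese dimension is secured, the same choice $H=mH_0$ works. For simplicity, the classical bound $\chi(\O_S)\ge 0$ on surfaces of maximal Albanese dimension (Castelnuovo--Beauville) places us in either the $\chi\ge 1$ branch or the $\chi=0, q\ge 2$ branch of the simplicity clause of Theorem \ref{main}. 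I expect the main obstacle to be the $p_g=0, q=1$ subcase of Case 2, where simplicity is not automatic from Theorem \ref{main} and must be obtained from external input.
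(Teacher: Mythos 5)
Your case decomposition and your Case 3 argument are essentially the paper's (the paper too rules out a non-maximal Albanese map via a fiber class of self-intersection $0$ contradicting Picard rank one), but there is one genuine structural difference: the paper never applies Theorem \ref{main} or the regular-surface results directly to $S$. It first builds a \emph{simple} Ulrich bundle $\E_0$ on the minimal model $S_0$, with $H_0$ sufficiently ample (via \cite[Thm.~1.2]{c1} or \cite[Thm.~1.1]{c2} when $q=0$, via Theorem \ref{main} when $q\ge 1$), and then transports it to $S$, together with a suitable very ample polarization, using the blow-up theorems \cite[Thm.~0.1]{k} and \cite[Thm.~2]{s}, which preserve simplicity. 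Your direct route on $S$ with $H=mH_0$, $m\gg 0$, is legitimate for \emph{existence} in all three of your cases --- Theorem \ref{main} has no minimality hypothesis, Remark \ref{ipo}(v) supplies (a)--(c), and Remark \ref{ipo3} confirms that Casnati's theorems apply to arbitrary regular surfaces --- and it also yields simplicity whenever $\chi(\O_S)\ge 1$ or $\chi(\O_S)=0$, $q(S)\ge 2$. In those cases your argument is actually shorter than the paper's, since it avoids \cite{k} and \cite{s} entirely.

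The gap is exactly where you flag it, but it is worse than you suggest: for $p_g=0$, $q=1$ you cannot simply ``appeal to \cite{c4, c4e}'', because those papers (and this paper, which cites them only in the existence discussions of \S 5) provide existence of rank-two Ulrich bundles, not simplicity. Nor does taking $H$ sufficiently ample rescue the third simplicity branch of Theorem \ref{main}: by Riemann--Roch, for $H\gg 0$ one has $h^0(\O_S(H-K_S))-h^0(\O_S(H))=K_S^2-K_S\cdot H$, which is \emph{positive} for instance on geometrically ruled surfaces over an elliptic curve ($K_S^2=0$, $K_S\cdot H<0$) --- consistently, in \S 5.3 the paper claims only existence, not simplicity, for such surfaces. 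The paper's minimal-model detour is precisely the mechanism that handles this subcase: the simplicity hypotheses (or their variants in Remark \ref{ipo2}) are checked on the few classes of minimal surfaces with $p_g=0$, $q=1$, and simplicity is then carried across the blow-ups by \cite{k} and \cite{s}; your proposal has no substitute for this step. Two smaller points: in Case 1 your assertion that a non-simple rank-two Ulrich bundle splits into Ulrich line bundles needs an argument (a non-scalar endomorphism only gives a strictly semistable extension of Ulrich line bundles; splitting uses $q=0$ through $\Ext^1(L,L)\cong H^1(\O_S)=0$ after identifying the sub and quotient), and ``excluded by a suitable choice of polarization'' is unsubstantiated --- both are moot since \cite[Thm.~1.2]{c1} and \cite[Thm.~1.1]{c2} already produce simple bundles; also note \cite[Thm.~1.2]{cohu} is stated for complex surfaces, while the paper works over an arbitrary algebraically closed field of characteristic zero.
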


Let us briefly comment on the hypotheses of Theorem \ref{main}. For proofs see Remark \ref{ipo}. Property (a) holds for example if $H-K_S$ is big and nef, property (b) if $H^1(\O_S(H))=0$ and $p_g(S) \le 5$. Property (c) is satisfied for example if $S$ is not of general type. Moreover if $H$ is sufficiently ample then (a), (b) and (c) are satisfied on any surface. 

Since there are many surfaces of maximal Albanese dimension or with $q(S)=1$ that are of general type (see for example \cite{pml}), we get, at least when $H$ is sufficiently ample (so that (a), (b) and (c) are satisfied), that they carry a rank two Ulrich vector bundle.

In the last section we show that there exists a rank two Ulrich vector bundle on some specific examples including Weierstrass fibrations.

Throughout the whole paper we work over an algebraically closed field $k$ of characteristic zero. A surface is by definition a $2$-dimensional projective scheme over $k$. We say that a property $\mathcal P$ is satisfied by a general element of a variety $X$ if $\mathcal P$ holds on every point of a non-empty open subset of $X$.

\section{Elementary transformation at a point}

\begin{defi}
\label{elem} 
Let $S$ be a smooth irreducible surface, let $\F$ be a locally free sheaf of rank two on $S$ with $h^0(\F) \ge 1$, let $P \in S$ be a point and let $\varphi \in \Hom(\F, \O_{P})$ such that $\varphi \neq 0$. We say that  $\F_{\varphi} := \Ker (\varphi)$ is the {\it elementary transformation of $\F$ at $P$}. If $P$ is general in $S$ and $\varphi$ is general in $\Hom(\F, \O_{P})$ we say that $\F_{\varphi}$ is a {\it general elementary transformation of $\F$}.
\end{defi}
By the above definition we have an exact sequence
\begin{equation}
\label{ffi}
\xymatrix{0 \ar[r] & \F_{\varphi} \ar[r]^{j} & \F \ar[r]^{\varphi} & \O_{P} \ar[r] & 0.}
\end{equation}
Moreover it follows exactly as in \cite[Construction 4.1 and Lemma 4.2]{c2} (note that for \eqref{ffi2} the hypothesis $h^0(\F(-H))=0$ is not necessary) that if $\F_{\varphi}$ is a general elementary transformation of $\F$ then
\begin{equation}
\label{ffi2}
c_1(\F_{\varphi}) = c_1(\F), \ \ c_2(\F_{\varphi}) = c_2(\F)+1, \ \ h^0(\F_{\varphi}) = h^0(\F) -1 \ \ \mbox{and} \ \ H^1(\F_{\varphi}) = H^1(\F).
\end{equation}
As in \cite[Lemma 2]{f} we have

\begin{lemma}
\label{simp} 
Let $S$ be a smooth irreducible surface, let $\F$ be a locally free sheaf of rank two on $S$ with $h^0(\F) \ge 1$ and let $\F_{\varphi}$ be an elementary transformation of $\F$. If $\F$ is simple then so is $\F_{\varphi}$.
\end{lemma}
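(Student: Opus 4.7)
The plan is to compare $\Hom(\F_\varphi,\F_\varphi)$ with $\Hom(\F,\F)$ by viewing everything inside $\Hom(\F_\varphi,\F)$, using the defining sequence \eqref{ffi}. The key point is that the point $P$ is a smooth point of the surface $S$, so the skyscraper sheaf $\O_P$ has enough depth (from the point of view of $\F$) that it contributes nothing to $\Hom$ or $\Ext^1$ against the locally free sheaf $\F$.

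First I would apply $\Hom(\F_\varphi,-)$ to \eqref{ffi}. By left-exactness, the composition with $j$ gives an injection
\begin{equation*}
0 \to \Hom(\F_\varphi,\F_\varphi) \xrightarrow{\; j\, \circ \;} \Hom(\F_\varphi,\F),
\end{equation*}
so every endomorphism $\psi$ of $\F_\varphi$ is determined by $j\circ\psi \in \Hom(\F_\varphi,\F)$.

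Next I would apply $\Hom(-,\F)$ to \eqref{ffi}, obtaining
\begin{equation*}
0 \to \Hom(\O_P,\F) \to \Hom(\F,\F) \xrightarrow{\; \circ\, j \;} \Hom(\F_\varphi,\F) \to \Ext^1(\O_P,\F).
\end{equation*}
Since $\F$ is locally free, hence torsion-free, $\Hom(\O_P,\F)=0$. Since $P$ is a smooth point of $S$, the local ring $\O_{S,P}$ is a regular local ring of depth $2$, so $\Ext^i(k(P),\O_{S,P})=0$ for $i=0,1$; tensoring with the free local model of $\F$ at $P$ gives $\Ext^1(\O_P,\F)=0$. Therefore $-\circ j$ induces an isomorphism $\Hom(\F,\F)\isomarrow \Hom(\F_\varphi,\F)$.

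Combining the two displays yields an injection $\Hom(\F_\varphi,\F_\varphi)\hookrightarrow \Hom(\F,\F)$. As $\F$ is simple, $\Hom(\F,\F)=k\cdot \Id_\F$, so the image of every $\psi\in\Hom(\F_\varphi,\F_\varphi)$ is $\lambda\, j$ for some $\lambda\in k$; that is, $j\circ\psi = \lambda\, j = j\circ(\lambda\Id_{\F_\varphi})$. Since $j$ is injective, $\psi=\lambda\Id_{\F_\varphi}$, proving $\F_\varphi$ is simple. I expect no real obstacle here; the only mildly delicate point is the $\Ext^1$ vanishing, which is immediate from the smoothness of $P$ via a Koszul resolution of $\O_P$ on $S$.
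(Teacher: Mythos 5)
Your proof is correct, and it arranges the homological algebra differently from the paper. The paper first proves $\Hom(\F,\F_{\varphi})=0$ (a nonzero map $\psi:\F\to\F_{\varphi}$ would make $j\circ\psi$ a nonzero, hence invertible, endomorphism of the simple sheaf $\F$, forcing $j$ to be surjective, a contradiction), then applies $\Hom(-,\F_{\varphi})$ to \eqref{ffi} to embed $\Hom(\F_{\varphi},\F_{\varphi})$ into $\Ext^1(\O_{P},\F_{\varphi})$, and finally shows $\dim \Ext^1(\O_{P},\F_{\varphi})=1$ by combining the sequence obtained from $\Hom(\O_{P},-)$ with the vanishing $\Ext^i(\O_{P},\F)=0$ for $i=0,1$, which it gets by Serre duality, $\Ext^i(\O_{P},\F)\cong H^{2-i}(\F^*(K_S)\otimes\O_{P})^*$, rather than by your Koszul-resolution/depth argument (the two computations are equivalent; in both, the identification of global $\Ext$ with the stalk computation at $P$, via the local-to-global spectral sequence for the skyscraper $\O_{P}$, is left implicit, which is standard but worth a word). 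You instead use the same key vanishing on the contravariant side to get $-\circ j:\Hom(\F,\F)\isomarrow\Hom(\F_{\varphi},\F)$, so that $\Hom(\F_{\varphi},\F)=k\cdot j$, and then cancel the monomorphism $j$ on the left to conclude $\psi=\lambda\Id_{\F_{\varphi}}$. Your arrangement is slightly more economical: it dispenses with the preliminary vanishing $\Hom(\F,\F_{\varphi})=0$ and with the computation of $\Ext^1(\O_{P},\F_{\varphi})$, invokes simplicity of $\F$ only once at the very end, and in fact yields the stronger structural statement that $\psi\mapsto\alpha$, where $\alpha$ is the unique endomorphism with $j\circ\psi=\alpha\circ j$, embeds $\End(\F_{\varphi})$ into $\End(\F)$ as algebras, whereas the paper's route only delivers the dimension bound $\hom(\F_{\varphi},\F_{\varphi})\le 1$. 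The paper's version, following Faenzi, has the mild advantage of working entirely with $\F_{\varphi}$-valued groups and the Serre-duality formalism already used throughout the surface arguments; both proofs ultimately rest on the single fact that $\Ext^i(\O_{P},\F)=0$ for $i\le 1$ when $\F$ is locally free at the smooth point $P$.
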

\begin{proof} 
First we show that $\Hom( \F, \F_{\varphi}) = 0$. Indeed if there is $0 \neq \psi \in \Hom( \F, \F_{\varphi})$, then $0 \neq j \circ \psi \in \Hom(\F,\F)$. As $\F$ is simple we get that $j \circ \psi $ is surjective and therefore so is $j$, a contradiction. Applying 
$\Hom(-,\F_{\varphi})$ to \eqref{ffi} we get an inclusion of $\Hom(\F_{\varphi}, \F_{\varphi})$ into $\Ext^1(\O_{P},\F_{\varphi})$, so it remains to prove that $\dim \Ext^1(\O_{P},\F_{\varphi})=1$. Since $\F$ is locally free we have by Serre duality that $\Ext^i(\O_{P},\F) \cong H^{2-i}(\F^*(K_S) \otimes \O_{P})^*=0$ for $i=0,1$. Applying $\Hom(\O_{P}, -)$ to \eqref{ffi} we therefore get that $\Hom(\O_{P},\O_{P}) \cong \Ext^1(\O_{P},\F_{\varphi})$ and we are done.
\end{proof}

\begin{lemma}
\label{riduz} 
Let $S$ be a smooth irreducible surface and let $H$ be a very ample divisor on $S$. Let $\F$ be a locally free sheaf of rank two on $S$ satisfying the following properties:
\begin{itemize}
\item [(i)] $c_1(\F) \equiv H+K_S$;
\item [(ii)] $c_2(\F) = \frac{1}{2} H \cdot (H+K_S) + 2 \chi(\O_S)-h^0(\F)$;
\item [(iii)] $H^1(\F) = H^2(\F) = 0$;
\item [(iv)] $\dim \Ext^2(\F,\F) = p_g(S)$;
\item [(v)] $\F$ is simple.
\end{itemize}
If $h^0(\F) \ge 1$ and $\F_{\varphi}$ is a general elementary transformation of $\F$, then $\F_{\varphi}$ also satisfies properties (i)-(v).
\end{lemma}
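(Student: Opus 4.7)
My plan is to verify (i), (ii), (iii), (v) directly from \eqref{ffi2} and Lemma~\ref{simp}, and to reserve the real work for (iv). Properties (i) and (ii) follow at once from the formulas $c_1(\F_\varphi)=c_1(\F)$, $c_2(\F_\varphi)=c_2(\F)+1$ and $h^0(\F_\varphi)=h^0(\F)-1$ in \eqref{ffi2}, combined with the corresponding properties of $\F$. For (iii), $H^1(\F_\varphi)=H^1(\F)=0$ is part of \eqref{ffi2}, and $H^2(\F_\varphi)=0$ drops out of the long exact sequence of \eqref{ffi} since $H^i(\O_P)=0$ for $i\ge 1$. Property (v) is exactly Lemma~\ref{simp}.

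The main obstacle is (iv), which I would reformulate by Serre duality on $S$ as
\[
\Ext^2(\F_\varphi,\F_\varphi)\cong \Hom(\F_\varphi,\F_\varphi\otimes K_S)^*,
\]
reducing the goal to showing $\dim\Hom(\F_\varphi,\F_\varphi\otimes K_S)=p_g(S)$. Applying $\Hom(\F_\varphi,-)$ to the $K_S$-twist of \eqref{ffi} gives
\[
0\to \Hom(\F_\varphi,\F_\varphi\otimes K_S)\to \Hom(\F_\varphi,\F\otimes K_S)\xrightarrow{(\varphi\otimes\id)_*}\Hom(\F_\varphi,\O_P),
\]
so the task splits into (a) identifying the middle term and (b) showing $(\varphi\otimes\id)_*=0$.

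For (a), local freeness of $\F\otimes K_S$ gives $\SExt^i(\O_P,\F\otimes K_S)=0$ for $i=0,1$, so applying $\Hom(-,\F\otimes K_S)$ to \eqref{ffi} forces the restriction $j^*\colon \Hom(\F,\F\otimes K_S)\isomarrow \Hom(\F_\varphi,\F\otimes K_S)$ to be an isomorphism. Combining hypothesis (iv) for $\F$ with the trace decomposition $\End(\F)=\O_S\oplus \End_0(\F)$ (valid in characteristic zero), the trace-free summand of $\Ext^2(\F,\F)$ vanishes; dually $H^0(\End_0(\F)\otimes K_S)=0$, so every $\Psi\colon\F\to \F\otimes K_S$ has the form $\id_\F\otimes \sigma$ for some $\sigma\in H^0(K_S)$.

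For (b), a direct computation shows that for such $\Psi$, the composition $(\varphi\otimes\id)\circ\Psi$ equals $\sigma(P)\cdot \varphi$ as a map $\F\to \O_P\otimes K_S\cong\O_P$, which becomes zero after precomposing with $j\colon\F_\varphi\to\F$ since $\varphi\circ j=0$. Hence $(\varphi\otimes\id)_*=0$ and $\dim\Hom(\F_\varphi,\F_\varphi\otimes K_S)=h^0(K_S)=p_g(S)$, giving (iv) via Serre duality. The delicate step is this interplay: the depth argument in (a) extends every map $\F_\varphi\to \F\otimes K_S$ uniquely from a map on $\F$, and the trace decomposition then forces those extensions to be scalar multiples of $\id_\F$ — it is here that both the simplicity of $\F$ and the assumption $\dim\Ext^2(\F,\F)=p_g(S)$ become essential.
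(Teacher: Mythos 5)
Your proposal is correct, and on the one substantive point it takes a genuinely different route from the paper: there, (i)--(iii) are read off from \eqref{ffi} and \eqref{ffi2} and (v) from Lemma \ref{simp}, exactly as you do, but (iv) is not argued at all --- it is outsourced to the citation \cite[Prop.~4.3(2)]{c2}. Your self-contained proof of (iv) --- Serre duality in the form $\Ext^2(\F_{\varphi},\F_{\varphi}) \cong \Hom(\F_{\varphi},\F_{\varphi}\otimes K_S)^*$ (valid for coherent, not only locally free, sheaves, which matters since $\F_{\varphi}$ fails to be locally free at $P$), the depth argument $\SExt^i(\O_P,\F\otimes K_S)=0$ for $i=0,1$ forcing $j^*\colon \Hom(\F,\F\otimes K_S) \isomarrow \Hom(\F_{\varphi},\F\otimes K_S)$, and the explicit vanishing of the induced map to $\Hom(\F_{\varphi},\O_P)$ --- is sound and effectively reconstructs the content of the black-boxed reference; note that it never uses genericity of $P$ or $\varphi$ (genericity is only needed for \eqref{ffi2}, hence for (ii)--(iii)), so you actually prove (iv) for an arbitrary elementary transformation. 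Two calibration remarks. First, the trace-decomposition step, while correct in characteristic zero, is more than is needed: the left-exact inclusion $\Hom(\F_{\varphi},\F_{\varphi}\otimes K_S) \hookrightarrow \Hom(\F_{\varphi},\F\otimes K_S) \cong \Hom(\F,\F\otimes K_S) \cong \Ext^2(\F,\F)^*$ already yields the upper bound $p_g(S)$, and the lower bound $\dim \Ext^2(\F_{\varphi},\F_{\varphi}) \ge p_g(S)$ holds for any nonzero torsion-free sheaf via the maps $\id \otimes \sigma$, $\sigma \in H^0(\O_S(K_S))$ --- this is exactly the inequality the paper itself invokes in the proof of Claim \ref{tre}\eqref{tre-v}; your longer route buys only the extra (unneeded) information that every map $\F_{\varphi} \to \F_{\varphi}\otimes K_S$ is of this scalar form. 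Second, your closing claim that the simplicity of $\F$ is ``essential'' in step (a) is inaccurate: nothing in your argument for (iv) uses (v) --- only hypothesis (iv) for $\F$ enters, and simplicity appears in the lemma solely through Lemma \ref{simp} for property (v).
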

\begin{proof} 
By \eqref{ffi} and \eqref{ffi2} we get immediately that $\F_{\varphi}$ satisfies properties (i)-(iii). Now $\F_{\varphi}$ satisfies (iv) by \cite[Prop.~4.3(2)]{c2} and (v) by Lemma \ref{simp} .
\end{proof}

\section{The starting vector bundle}

We first construct, as in \cite{b2}, a vector bundle that will be the starting point of an inductive procedure.

\begin{lemma}
\label{e0} 
Let $S$ be a smooth irreducible surface such that either $S$ is of maximal Albanese dimension or $q(S)=1$. Let $H$ be a very ample divisor on $S$ satisfying:
\begin{itemize}
\item [(a)] $H^1(\O_S(H))=H^2(\O_S(H))=0$

\hskip -1.7cm and, when $\chi(\O_S) \ge 1$, also

\item [(b)] $H^2 \ge H \cdot K_S + 2 q(S)-2$, and
\item [(c)] $H^0(\O_S(2K_S-H))=0$.
\end{itemize}

Let $\eta \in Pic^0(S)$ be general. Then there exists a rank two vector bundle $\G_{\eta}$ on $S$ with 
\begin{equation}
\label{giusto}
c_1(\G_{\eta}) = H + K_S + 2 \eta, \ \ h^0(\G_{\eta}) = h^0(\G_{\eta}(-2\eta)) = \chi(\O_S)
\end{equation}
and such that both $\G_{\eta}$ and $\G_{\eta}(-2\eta)$ satisfy (i)-(iii) of Lemma \ref{riduz}, (iv) if $\chi(\O_S) \ge 1$ and also (v) if either $p_g(S) \ge 1$ or $h^0(\O_S(-K_S))=0$ and $h^0(\O_S(H-K_S)) \le h^0(\O_S(H))$.
\end{lemma}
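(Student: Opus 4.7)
The plan is to follow \cite{b2}: build a single fixed rank two bundle $\E$ on $S$ via a Serre/Cayley-Bacharach extension and then set $\G_\eta := \E \otimes \O_S(\eta)$. Since $\eta \in \Pic^0(S)$ is numerically trivial, the Chern classes of $\G_\eta$ and of $\G_\eta(-2\eta) = \E \otimes \O_S(-\eta)$ are independent of $\eta$; choosing $c_1(\E) \equiv H + K_S$ and $c_2(\E) = \tfrac12 H \cdot (H+K_S) + \chi(\O_S)$ matches conditions (i) and (ii) of Lemma \ref{riduz}, and a direct Hirzebruch--Riemann--Roch computation gives $\chi(\G_\eta) = \chi(\G_\eta(-2\eta)) = \chi(\O_S)$, which is the target value of $h^0$.

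I would produce $\E$ as follows: pick a general zero-dimensional subscheme $Z \subset S$ of length $\ell = \tfrac12 H \cdot (H+K_S) + \chi(\O_S)$ and form a non-split extension
\begin{equation*}
0 \to \O_S \to \E \to \cI_Z(H + K_S) \to 0,
\end{equation*}
whose local freeness is the Cayley--Bacharach property of $Z$ with respect to $|H + 2K_S|$; genericity of $Z$ together with hypothesis (c), which enforces $h^0(\O_S(2K_S - H)) = 0$, makes this available. Twisting by $\pm \eta$ gives
\begin{equation*}
0 \to \O_S(\pm\eta) \to \G_\eta^{\pm} \to \cI_Z(H + K_S \pm \eta) \to 0,
\end{equation*}
where $\G_\eta^+ = \G_\eta$ and $\G_\eta^- = \G_\eta(-2\eta)$, and conditions (i)--(ii) follow at once from the construction.

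For property (iii), hypothesis (a) together with upper semicontinuity on $\Pic^0(S)$ gives $H^i(\O_S(H + \eta)) = 0$ for $i \ge 1$ and general $\eta$; Kodaira vanishing applied to the ample divisor $H + \eta$ then yields $H^i(\O_S(H + K_S + \eta)) = 0$ for $i \ge 1$. Since $q(S) \ge 1$, also $H^0(\O_S(\eta)) = 0$ for general $\eta \ne 0$, so the long exact sequence identifies the cohomology of $\G_\eta$ with that of $\cI_Z(H+K_S+\eta)$. The key equality $h^0(\G_\eta) = \chi(\O_S)$, and not merely $\ge \chi(\O_S)$ from Euler characteristic, requires a precise control of the evaluation map $H^0(\O_S(H+K_S+\eta)) \to H^0(\O_Z)$; here I would invoke the Green--Lazarsfeld generic vanishing theorem when $S$ is of maximal Albanese dimension, and use the elliptic-fibration structure together with hypothesis (b) when $q(S) = 1$, to show that for general $\eta$ the subscheme $Z$ fails to impose independent conditions on $|H + K_S + \eta|$ in exactly the right way. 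The same analysis with $-\eta$ in place of $\eta$ handles $\G_\eta(-2\eta)$.

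Conditions (iv) and (v) are handled separately. For (iv), when $\chi(\O_S) \ge 1$, the trace decomposition $\Ext^2(\G_\eta, \G_\eta) = H^2(\O_S) \oplus \Ext^2(\G_\eta, \G_\eta)_0$ together with Serre duality and the vanishings from (iii) reduces (iv) to showing the traceless part vanishes, which follows from the defining extension. For (v), I would compute $\Hom(\G_\eta, \G_\eta)$ directly from the defining sequence: a non-scalar endomorphism would force either an inclusion $\O_S(\eta) \hookrightarrow \cI_Z(H+K_S+\eta)$, giving a section of $\O_S(H+K_S)$ with prescribed vanishing, or a splitting of the extension; the hypotheses on $p_g$, $h^0(\O_S(-K_S))$, and $h^0(\O_S(H-K_S))$ are precisely what rules these out in the delicate borderline cases. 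The main obstacle will be pinning down $h^0(\G_\eta) = \chi(\O_S)$ exactly when $\chi(\O_S) \ge 2$: this is where the construction genuinely splits into two cases according to the Albanese dimension of $S$, and where hypothesis (b) enters.
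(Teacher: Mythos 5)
There is a genuine gap, and it sits at the heart of the construction: the placement of the numerically trivial twist. You put $\O_S(\pm\eta)$ on the sub-line-bundle side, i.e.\ you work with $0 \to \O_S(\pm\eta) \to \G_\eta^{\pm} \to \cI_Z(H+K_S\pm\eta) \to 0$. Since $h^0(\O_S(\pm\eta)) = h^1(\O_S(\pm\eta)) = 0$ for general $\eta$ (this much is correct, and is generic vanishing), the long exact sequence gives $h^0(\G_\eta^{\pm}) = h^0(\cI_Z(H+K_S\pm\eta))$. By Kodaira vanishing $h^0(\O_S(H+K_S\pm\eta)) = \frac{1}{2}H\cdot(H+K_S) + \chi(\O_S) = \ell = \operatorname{length}(Z)$, so a \emph{general} $Z$ imposes independent conditions and yields $h^0(\G_\eta^{\pm}) = 0$, not $\chi(\O_S)$; conversely, to hit the target value you would need $Z$ to fail to impose independent conditions on $|H+K_S+\eta|$ \emph{and} on $|H+K_S-\eta|$ by exactly $\chi(\O_S)$ simultaneously, and in addition the coboundary maps $H^1(\cI_Z(H+K_S\pm\eta)) \to H^2(\O_S(\pm\eta))$ (the latter of dimension $\chi(\O_S)$) would have to be isomorphisms in order to get property (iii) of Lemma \ref{riduz}. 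You acknowledge this point but defer it to ``generic vanishing'' and hypothesis (b); neither can produce such a $Z$ — generic vanishing controls cohomology of degree-zero line bundles, not the conditions a fixed zero-dimensional scheme imposes on $|H+K_S\pm\eta|$. The paper's construction is designed precisely to sidestep this: it builds $\F$ with $c_1(\F) = H-K_S$ via $0 \to \O_S \to \F \to \cI_{Z/S}(H-K_S) \to 0$, with $Z$ consisting of $N+1 = h^0(\O_S(H))$ general points \emph{on a smooth curve} $C \in |H|$, and only afterwards sets $\G_\eta = \F(K_S+\eta)$. Then the sub-line-bundle is $\O_S(K_S\pm\eta)$, whose $h^0$ equals $h^2(\O_S(\mp\eta)) = \chi(\O_S)$ by Serre duality and Riemann--Roch, while the quotient $\cI_{Z/S}(H\pm\eta)$ has \emph{all} cohomology zero (Claim \ref{due}(iv), using $h^0(\O_S(H\pm\eta)) = N+1$ and the generality of $Z$ on $C$). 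So every section of $\G_\eta$ comes from the sub, and the exact count $h^0(\G_\eta) = h^0(\G_\eta(-2\eta)) = \chi(\O_S)$ together with (iii) falls out with no delicate failure-of-independence condition at all.

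Two further steps of your sketch do not go through as written. First, local freeness: for your quotient $\cI_Z(H+K_S)$ the relevant Cayley--Bacharach condition is indeed with respect to $|H+2K_S|$, but hypothesis (c), which concerns $h^0(\O_S(2K_S-H))$, has no bearing on it, and a general $Z$ need not satisfy it unless one checks $h^0(\cI_{Z'}(H+2K_S)) = 0$ for colength-one $Z' \subset Z$, which you do not; the paper's choice $c_1(\F) = H-K_S$ reduces the requirement to Cayley--Bacharach with respect to $|H|$, which is verified by an explicit projective argument exploiting $Z \subset C$. Second, your treatment of (iv) — ``the traceless part vanishes, which follows from the defining extension'' — is not an argument: the equality $\dim\Ext^2(\F,\F) = p_g(S)$ is genuine work in the paper, requiring $H^0(\F(2K_S-H)) = 0$ (this is where (c) and $H^0(\cI_{Z/S}(K_S)) = 0$ enter) and the restriction computation $h^0(\O_C(H)(-2Z)) = 0$, which is where hypothesis (b) is actually used, via the inequality $2(N+1) \ge N + q(S)$; note also that the $K_S = 0$ (abelian) case of simplicity is handled in the paper by citing Beauville, a case your endomorphism sketch would still have to cover separately.
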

\begin{proof} 
Set $N+1 = h^0(\O_S(H))$. By (a) and Riemann-Roch we find that 
\begin{equation}
\label{ineq1}
\frac{1}{2} H \cdot (H-K_S) + \chi(\O_S) = N+1
\end{equation}
whence
\begin{equation}
\label{ineq}
\hbox{(b) is equivalent to} \ \ N+1 \ge p_g(S).
\end{equation}
Let us also record that 
\begin{equation}
\label{effe1}
H^0(\O_S(K_S-H))=H^1(\O_S(K_S-H))=0.
\end{equation}
Indeed $H^i(\O_S(K_S-H)) = H^{2-i}(\O_S(H))^*=0$ for $i=0,1$ by (a).

We will now divide the proof into several claims. 
\begin{claim}
\label{uno}
The following hold:
\begin{enumerate}[(i)]
\item \label{uno-i} $H^i(\O_S(\pm \eta)) = 0 \ \mbox{for} \ i = 0, 1$;
\item \label{uno-ii} $h^2(\O_S(\pm \eta)) = \chi(\O_S)$;
\item \label{uno-iii} $h^0(\O_S(H \pm \eta)) = N+1$;
\item \label{uno-iv} $H^i(\O_S(H \pm \eta)) = 0 \ \mbox{for} \ i = 1, 2$.
\end{enumerate}
\end{claim}
\begin{proof}
To see \eqref{uno-i} observe that $H^0(\O_S(\pm \eta)) = 0$ since $\eta \neq \O_S$. As for
\begin{equation}
\label{first}
H^1(\O_S(\pm \eta)) = 0
\end{equation}
we distinguish the two cases of the hypothesis. 

If $S$ is of maximal Albanese dimension then \eqref{first} follows by generic vanishing, that holds over the complex numbers by \cite[Thm.~1]{gl} and over $k$ by \cite[Cor.~3.2]{hc} and Grauert-Riemenschneider vanishing \cite[Cor.~1-2-4]{kmm}. 

Now assume that $q(S)=1$. Then the Albanese variety of $S$ is an elliptic curve $E$ and the Albanese map $a: S \to E$ is surjective and has connected fibers by \cite[Prop.~V.15]{b4}. We claim that $H^0(R^1a_*\O_S)=0$. Indeed the Leray spectral sequence gives the exact sequence
$$0 \to H^1(a_*\O_S) \to H^1(\O_S) \to H^0(R^1a_*\O_S) \to 0$$
and since $h^1(a_*\O_S)=h^1(\O_E)=1=h^1(\O_S)$, we get that $H^0(R^1a_*\O_S)=0$. As in \cite[Proof of Cor.~3.3]{c4} it follows by the existence of a Poincar\'e line bundle and semicontinuity that the subsets
$$Y^{\pm} := \{L \in \Pic^0(E) : h^0((R^1a_*\O_S)(\pm L)) > 0\}$$
are closed and $\O_E \not\in Y^{\pm}$, whence $h^0((R^1a_*\O_S)(\pm \eta_0)) = 0$ for $\eta_0 \in \Pic^0(E)$ general. Let $\eta' = a^* \eta_0$ and set $\L = \pm \eta'$. The Leray spectral sequence gives the exact sequence
\begin{equation}
\label{ler}
0 \to H^1(a_*\L) \to H^1(\L) \to H^0(R^1a_*\L) \to 0.
\end{equation}
Since $a_*\L = \O_E(\pm \eta_0)$ and $R^1a_*\L = (R^1a_*\O_S) (\pm \eta_0)$ we get 
$H^1(a_*\L)=H^1(\O_E(\pm \eta_0))=0$ and $H^0(R^1a_*\L) = H^0((R^1a_*\O_S)(\pm \eta_0)) = 0$ so that \eqref{ler} gives that $H^1(\L)=0$ and \eqref{first} follows. 

Hence \eqref{uno-i} is proved. Now \eqref{uno-ii} follows by \eqref{uno-i} and Riemann-Roch.

To see \eqref{uno-iv} notice that, as above, the subsets
$$Y_i^{\pm} := \{L \in \Pic^0(S) : h^i(\O_S(H \pm L)) > 0\}$$
are closed. Since, by hypothesis (a), $\O_S \not\in Y_i^{\pm}$ for $i = 1, 2$, we get that $H^i(\O_S(H \pm \eta)) = 0$ for $i = 1, 2$ and $\eta \in \Pic^0(S)$ general. That is \eqref{uno-iv} holds. Now \eqref{uno-iii} follows by \eqref{uno-iv}, (a) and Riemann-Roch since $h^0(\O_S(H \pm \eta)) = \chi(\O_S(H \pm \eta)) = \frac{1}{2} H \cdot (H-K_S) + \chi(\O_S) = h^0(\O_S(H)) = N+1$. This proves Claim \ref{uno}.
\end{proof}

We now construct a suitable $0$-dimensional subscheme $Z$ of $S$ having the necessary properties.

\begin{claim}
\label{due}
Let $C \in |H|$ be a smooth irreducible curve defined by a section $s \in H^0(\O_S(H))$ and let $P_i, 1 \le i \le N+1$ be general points on $C$. Then the following hold:
\begin{enumerate}[(i)]
\item \label{due-i} $h^0(\O_C(K_S)) = p_g(S)$
\item \label{due-ii} $Z = P_1 + \ldots + P_{N+1} \ \mbox{satisfies the Cayley-Bacharach property with respect to} \ |H|$;
\item \label{due-iii} $H^0({\mathcal I}_{Z/S}(H))= k s$;
\item \label{due-iv} $H^i({\mathcal I}_{Z/S}(H \pm \eta)) = 0 \ \mbox{for} \ i \ge 0$;
\item \label{due-v} If $\chi(\O_S) \ge 1$ then $H^0({\mathcal I}_{Z/S}(K_S))=0$;
\item \label{due-vi} If $\chi(\O_S) \ge 1$ then $h^0(\O_C(H)(-2Z))=0$;
\item \label{due-vii} If either $K_S \neq 0$ and $p_g(S) \ge 1$ or $h^0(\O_S(-K_S))=0$ and $h^0(\O_S(H-K_S)) \le h^0(\O_S(H))$, then $H^0({\mathcal I}_{Z/S}(H-K_S))=0$.
\end{enumerate}
\end{claim}
\begin{proof}
By \eqref{effe1} and the exact sequence
$$0 \to \O_S(K_S-H) \to \O_S(K_S) \to \O_C(K_S) \to 0$$
we get that
$$h^0(\O_C(K_S)) = h^0(\O_S(K_S)) = p_g(S)$$
thus giving \eqref{due-i}. To see \eqref{due-ii}  let $V = \Im \{H^0(\O_S(H)) \to H^0(\O_C(H))\}$, so that $C \subset \PP V = \PP^{N-1}$. Now let $i \in \{1, \ldots, N+1\}$ and let $\sigma \in H^0(\O_S(H))$ be such that $\sigma (P_j)=0$ for all $j \neq i$. If $\sigma_{|C} \neq 0$ then it defines a hyperplane in $\PP V$ passing through $N$ general points of $C$, a contradiction. Therefore $\sigma_{|C} = 0$ and then $\sigma (P_i)=0$. This proves \eqref{due-ii} and also \eqref{due-iii}.

Next we prove \eqref{due-iv}. Using Claim \ref{uno}\eqref{uno-i} and Claim \ref{uno}\eqref{uno-iii} in the exact sequences
$$0 \to \O_S(\pm \eta) \to \O_S(H \pm \eta) \to \O_C(H \pm \eta) \to 0$$
we get that $h^0(\O_C(H \pm \eta)) = h^0(\O_S(H \pm \eta)) = N+1$. Moreover, as $Z$ consists of $N+1$ general points of $C$, we have that 
$$h^0({\mathcal I}_{Z/C}(H \pm \eta)) = h^0(\O_C(H \pm \eta))-N-1 = 0.$$
Applying the latter vanishing in the exact sequences 
$$0 \to \O_S(\pm \eta) \to {\mathcal I}_{Z/S}(H \pm \eta) \to {\mathcal I}_{Z/C}(H \pm \eta) \to 0$$
and using Claim \ref{uno}\eqref{uno-i}, we get that $H^0({\mathcal I}_{Z/S}(H \pm \eta)) = 0$. Now Claim \ref{uno}\eqref{uno-iii} and Claim \ref{uno}\eqref{uno-iv} and the exact sequences 
$$0 \to {\mathcal I}_{Z/S}(H \pm \eta) \to \O_S(H \pm \eta) \to \O_Z(H \pm \eta) \to 0$$
show that $H^i({\mathcal I}_{Z/S}(H \pm \eta)) = 0$ for $i \ge 1$. Thus \eqref{due-iv} is proved.

As for the proof of \eqref{due-v} assume that $\chi(\O_S) \ge 1$. Since $h^0(\O_C(K_S)) = p_g(S)$ by \eqref{due-i},
we see that, as $Z$ is general, $h^0({\mathcal I}_{Z/C}(K_S)) = \max\{0, h^0(\O_C(K_S)) - N -1\} = 0$ by \eqref{ineq}. Then, from the exact sequence
$$0 \to \O_S(K_S-H) \to {\mathcal I}_{Z/S}(K_S) \to {\mathcal I}_{Z/C}(K_S) \to 0$$
and \eqref{effe1} we deduce that $H^0({\mathcal I}_{Z/S}(K_S))=0$, that is \eqref{due-v}.

To see \eqref{due-vi} suppose that $\chi(\O_S) \ge 1$. Notice that the inequality 
\begin{equation}
\label{ine}
2(N+1) \ge N+q(S)
\end{equation}
holds. Indeed \eqref{ine} is equivalent to $N+1 \ge q(S)-1$ and, by \eqref{ineq1}, it is also equivalent to 
$$H^2 + 2 p_g(S) \ge H \cdot K_S + 4 q(S)-4.$$ 
But $\chi(\O_S) \ge 1$, hence $p_g(S) \ge q(S)$ and then (b) gives
$$H^2 + 2 p_g(S) \ge H \cdot K_S + 4 q(S) - 2$$
so that \eqref{ine} is proved. By (a) and the exact sequence
$$0 \to \O_S \to  \O_S(H) \to  \O_C(H) \to 0$$
we get that $h^0(\O_C(H)) = N + q(S)$ and therefore, being $Z$ is general, we find by \eqref{ine} that 
$$h^0(\O_C(H)(-2Z))=\max\{0, N + q(S)-(2N+2)\}=0$$
that is \eqref{due-vi} holds.

In order to see \eqref{due-vii} assume first that  $h^0(\O_S(-K_S))=0$ and $h^0(\O_S(H-K_S)) \le h^0(\O_S(H))$. Consider the exact sequence
$$0 \to \O_S(-K_S) \to \O_S(H-K_S) \to \O_C(H-K_S) \to 0.$$
Let $W = \Im \{H^0(\O_S(H-K_S)) \to H^0(\O_C(H-K_S))\}$, so that $\dim W = h^0(\O_S(H-K_S)) \le N+1$. If $W=\{0\}$ then $H^0(\O_S(H-K_S))=0$ and therefore also $H^0({\mathcal I}_{Z/S}(H-K_S))=0$, that is \eqref{due-vii} holds. If $\dim W \ge 1$ let $\varphi: C \to \PP W$ be the morphism defined by $W$. Let $\sigma \in H^0(\O_S(H-K_S))$ be such that $\sigma (P_i)=0$ for all $1 \le i \le N+1$. If $\sigma_{|C} \neq 0$ then it defines a hyperplane in $\PP W$ passing through $N+1$ general points of $\varphi(C)$, a contradiction since by definition $\varphi(C)$ is not contained in a hyperplane. Therefore $\sigma_{|C} = 0$, whence $\sigma = 0$. This gives \eqref{due-vii} in this case.

Now assume that $K_S \neq 0$ and $p_g(S) \ge 1$. Pick $\tau \in H^0(\O_S(K_S))$ such that $\tau \neq 0$. Let $\sigma \in H^0({\mathcal I}_{Z/S}(H - K_S))$. If $\sigma \neq 0$ then $0 \neq \sigma \tau \in H^0({\mathcal I}_{Z/S}(H))$, hence, by \eqref{due-iii}, $\sigma \tau = \lambda s$, for some $\lambda \in k^*$. Let $D$ be the divisor associated to $\tau$. Note that $D$ is effective non-zero because $K_S \neq 0$. But $D \subseteq C$ and therefore $D=C$, that is $K_S \sim H$, giving by (a) the  contradiction $0 = h^1(\O_S(H)) = h^1(\O_S(K_S)) = q(S)$. Therefore $\sigma = 0$ and \eqref{due-vii} is proved.
Thus the proof of Claim \ref{due} is complete.
\end{proof}

Keeping notation as in Claim \ref{due}, we now see that the $0$-dimensional subscheme $Z$ of $S$ gives rise to a vector bundle whose properties are listed below.

\begin{claim}
\label{tre}
There exists a rank two vector bundle $\F$ on $S$ sitting in two exact sequences
\begin{equation}
\label{prima}
\xymatrix{0 \ar[r] & \O_S \ar[r] & \F \ar[r]& {\mathcal I}_{Z/S}(H - K_S) \ar[r] & 0}
\end{equation}
and
\begin{equation}
\label{altra}
\xymatrix{0 \ar[r] & \O_C(Z) \ar[r] & \F \otimes \O_C \ar[r]& \O_C(H-K_S)(-Z) \ar[r] & 0.}
\end{equation}
Moreover $\F$ satisfies:
\begin{enumerate}[(i)]
\item \label{tre-i} $c_1(\F) = H - K_S$;
\item \label{tre-ii} $c_2(\F)=N+1$;
\item \label{tre-iii} $H^0(\F(K_S-H))=0$;
\item \label{tre-iv} If $\chi(\O_S) \ge 1$ then $H^0(\F(2K_S-H))=0$;
\item \label{tre-v} If $\chi(\O_S) \ge 1$ then $\dim \Ext^2(\F,\F) = p_g(S)$;
\item \label{tre-vi} If either $K_S \neq 0$ and $p_g(S) \ge 1$ or $h^0(\O_S(-K_S))=0$ and $h^0(\O_S(H-K_S)) \le h^0(\O_S(H))$, then $\F$ is simple.
\end{enumerate}
\end{claim}
\begin{proof}
By Claim \ref{due}\eqref{due-ii} we have that $Z$ satisfies the Cayley-Bacharach property with respect to $|H|$. As is well-known \cite[Thm.~5.1.1]{hl}, $Z$ gives rise to a rank two vector bundle $\F$, with $c_1(\F) = H - K_S$, $c_2(\F)=N+1$, together with a section $\gamma \in H^0(\F)$ whose zero locus is $Z$ and to an exact sequence
$$\xymatrix{0 \ar[r] & \O_S \ar[r]^{\gamma} & \F \ar[r]& {\mathcal I}_{Z/S}(H - K_S) \ar[r] & 0.}$$
Also observe that, since $\det (\F \otimes \O_C) = \O_C(H - K_S)$, we get an exact sequence
$$\xymatrix{0 \ar[r] & \O_C(Z) \ar[r]^{\gamma_{|C}} & \F \otimes \O_C \ar[r]& \O_C(H-K_S)(-Z) \ar[r] & 0.}$$
Thus we have shown \eqref{tre-i}, \eqref{tre-ii} and the existence of \eqref{prima} and \eqref{altra}.

Tensoring \eqref{prima} by $\O_S(K_S-H)$ we get the exact sequence
$$0 \to \O_S(K_S-H) \to \F(K_S-H) \to {\mathcal I}_{Z/S} \to 0$$
whence \eqref{effe1} gives that $H^0(\F(K_S-H))=0$, that is \eqref{tre-iii}. To see \eqref{tre-iv} assume that $\chi(\O_S) \ge 1$. Tensoring \eqref{prima} by $\O_S(2K_S-H)$ we get the exact sequence
$$0 \to \O_S(2K_S-H) \to \F(2K_S-H) \to {\mathcal I}_{Z/S}(K_S) \to 0.$$
Hence (c) and Claim \ref{due}\eqref{due-v} give that $H^0(\F(2K_S-H))=0$, that is \eqref{tre-iv}.

To see \eqref{tre-v} assume again that $\chi(\O_S) \ge 1$. Since the inequality $\dim \Ext^2(\F,\F) \ge p_g(S)$ holds for every rank two vector bundle, we just need to prove that 
\begin{equation}
\label{ext3}
\dim \Ext^2(\F,\F) \le p_g(S).
\end{equation}
Tensoring \eqref{altra} by $\O_C(K_S)(-Z)$ gives the exact sequence
\begin{equation}
\label{fin}
0 \to \O_C(K_S) \to \F \otimes \O_C(K_S)(-Z) \to \O_C(H)(-2Z) \to 0.
\end{equation}
Now using Claim \ref{due}\eqref{due-i} and Claim \ref{due}\eqref{due-vi}, we get from \eqref{fin} that 
\begin{equation}
\label{finu}
h^0({\mathcal I}_{Z/C} \otimes \F (K_S))=h^0(\F \otimes \O_C(K_S)(-Z))= h^0(\O_C(K_S)) = p_g(S).
\end{equation}
Then from the exact sequence
$$0 \to \F(K_S-H) \to {\mathcal I}_{Z/S}\otimes \F(K_S) \to {\mathcal I}_{Z/C} \otimes \F (K_S) \to 0$$
and \eqref{tre-iii} we deduce using \eqref{finu} that 
\begin{equation}
\label{finio}
h^0( {\mathcal I}_{Z/S} \otimes \F(K_S)) \le h^0({\mathcal I}_{Z/C} \otimes \F (K_S)) = p_g(S).
\end{equation}
Since $\det \F = \O_S(H-K_S)$ we have that
$$\F^*(K_S) \cong  \F(2K_S-H)$$ 
whence we get by \eqref{tre-iv} that 
$H^0(\F^*(K_S))=0$. From \eqref{prima} we also have the exact sequence
\begin{equation}
\label{sec}
0 \to \F^*(K_S) \to \F \otimes \F^*(K_S) \to {\mathcal I}_{Z/S} \otimes \F(K_S) \to 0
\end{equation}
and therefore we find from \eqref{finio} that
$$\dim \Ext^2(\F,\F) = h^0(\F \otimes \F^*(K_S)) \le h^0( {\mathcal I}_{Z/S} \otimes \F(K_S)) \le p_g(S)$$
thus proving \eqref{ext3}. Hence \eqref{tre-v} is proved. Finally to see  \eqref{tre-vi} assume that either $K_S \neq 0$ and $p_g(S) \ge 1$ or $h^0(\O_S(-K_S))=0$ and $h^0(\O_S(H-K_S)) \le h^0(\O_S(H))$. Since $H^0({\mathcal I}_{Z/S}(H-K_S))=0$ by Claim \ref{due}\eqref{due-vii}, we deduce from \eqref{prima} that $h^0(\F)=1$. 
Tensoring \eqref{prima} by $\F^* \cong \F(K_S-H)$ we get the exact sequence
\begin{equation}
\label{simple}
0 \to \F^* \to \F \otimes \F^* \to {\mathcal I}_{Z/S} \otimes \F \to 0.
\end{equation}
Now $H^0(\F^*) = H^0(\F(K_S-H)) = 0$ by \eqref{tre-iii} and therefore, using \eqref{simple} we have
$$h^0(\F \otimes \F^*) \le h^0( {\mathcal I}_{Z/S} \otimes \F) \le h^0(\F) = 1$$
that is $\F$ is simple. This proves \eqref{tre-vi} and concludes the proof of Claim \ref{tre}.
\end{proof}
We now proceed to end the proof of Lemma \ref{e0}.
 
Let $\F$ be the vector bundle found in Claim \ref{tre} and let $\G_{\eta} = \F(K_S+\eta)$. To ease notation, set, from now on, $\G = \G_{\eta}$. 

We first record that, by Claim \ref{tre}\eqref{tre-i}, Claim \ref{tre}\eqref{tre-ii} and \eqref{ineq1}, we have 
\begin{equation}
\label{cl}
c_1(\G) = H + K_S + 2 \eta, \ \ c_2(\G) = \frac{1}{2} H \cdot (H+K_S) + \chi(\O_S)
\end{equation}
and by \eqref{prima} there are two exact sequences
\begin{equation}
\label{est}
0 \to \O_S(K_S + \eta) \to \G \to {\mathcal I}_{Z/S}(H + \eta) \to 0
\end{equation}
\begin{equation}
\label{est1}
0 \to \O_S(K_S -\eta) \to \G(-2 \eta) \to {\mathcal I}_{Z/S}(H - \eta) \to 0.
\end{equation}
By Serre duality, Claim \ref{uno}\eqref{uno-i} and Claim \ref{uno}\eqref{uno-ii} we find that $h^0(\O_S(K_S \pm \eta)))=h^2(\O_S(\mp \eta))=\chi(\O_S)$ and $h^i(\O_S(K_S \pm \eta)))=h^{2-i}(\O_S(\mp \eta))=0$ for $i =1, 2$.
We therefore deduce by \eqref{est}, \eqref{est1} and Claim \ref{due}\eqref{due-iv} that 
\begin{equation}
\label{hje}
h^0(\G) = h^0(\G(-2 \eta)) = \chi(\O_S), \ \ H^i(\G) = H^i(\G(-2 \eta)) = 0  \ \mbox{for} \ i \ge 1.
\end{equation}

From \eqref{cl} and \eqref{hje} we see that \eqref{giusto} is proved and also that $\G$ and $\G(-2\eta)$ satisfy (i)-(iii) of Lemma \ref{riduz}. Also (iv) of Lemma \ref{riduz} is satisfied by $\G$ and $\G(-2\eta)$ when $\chi(\O_S) \ge 1$ since 
$$\dim \Ext^2(\G,\G) = \dim \Ext^2(\G(-2 \eta),\G(-2 \eta)) = \dim \Ext^2(\F,\F) = p_g(S)$$ 
by Claim \ref{tre}\eqref{tre-v}. 

Finally let us prove that (v) of Lemma \ref{riduz} holds for $\G$ and $\G(-2\eta)$ if either $p_g(S) \ge 1$ or $h^0(\O_S(-K_S))=0$ and $h^0(\O_S(H-K_S)) \le h^0(\O_S(H))$. 

If $K_S \neq 0$ we obtain that Claim \ref{tre}\eqref{tre-vi} holds, that is $\F$ is simple. Therefore so are $\G$ and $\G(-2 \eta)$, whence they satisfy (v) of Lemma \ref{riduz}.

If $K_S = 0$ then $S$ is an Abelian surface and $\G$ is the same vector bundle constructed in \cite[Thm.~1]{b2}. This is simple by \cite[Rmk.~2]{b2}, and, of course, so is $\G(-2 \eta)$. Therefore (v) of Lemma \ref{riduz} is again satisfied.

This concludes the proof of Lemma \ref{e0}.
\end{proof}

\section{Proof of Theorem \ref{main} and Corollary \ref{main2}}

\renewcommand{\proofname}{Proof of Theorem \ref{main}}
\begin{proof} 
Let $\eta \in Pic^0(S)$ be general. Set $\chi = \chi(\O_S)$ and notice that $\chi \ge 0$: this is obvious for $q(S)=1$ and follows by \cite[Cor.]{gl} when $S$ has maximal Albanese dimension. 

For each $i = 0, \ldots, \chi$ we claim that we can construct a rank two vector bundle $\G_i$ such that:
\begin{enumerate}[\hspace{.4cm}(A)]
\item $h^0(\G_i) = h^0(\G_i(-2\eta)) = \chi-i$;
\item $\G_i$ and $\G_i(-2\eta)$ satisfy (i)-(iii) of Lemma \ref{riduz};
\item If $i \ge 1$ then both $\G_i$ and $\G_i(-2\eta)$ satisfy (iv) and (v) of Lemma \ref{riduz}.
\end{enumerate}
To see the claim let $\G_{\eta}$ be the vector bundle constructed in Lemma \ref{e0} and set $\G_0 = \G_{\eta}$. Then $h^0(\G_0) = h^0(\G_0(-2\eta)) = \chi$. Also $\G_0$ and $\G_0(-2\eta)$ satisfy (i)-(iii) of Lemma \ref{riduz}, (iv) if $\chi \ge 1$ and also (v) if either $p_g(S) \ge 1$ or $h^0(\O_S(-K_S))=0$ and $h^0(\O_S(H-K_S)) \le h^0(\O_S(H))$. Now assume that $1 \le i \le \chi$ and suppose we have constructed $\G_{i-1}$ as in the claim. Since $\chi \ge 1$ we have that $p_g(S) \ge1$, whence, when $i =1$, we get that $\G_0$ and $\G_0(-2\eta)$ satisfy (iv) and (v) of Lemma \ref{riduz}. 
Therefore $\G_{i-1}$ and $\G_{i-1}(-2\eta)$ satisfy (i)-(v) of Lemma \ref{riduz}. Now $h^0(\G_{i-1}) = \chi-i+1 \ge 1$, whence, by Lemma \ref{riduz}, a general elementary transformation $(\G_{i-1})_{\varphi}$ of $\G_{i-1}$ also satisfies properties (i)-(v). As in \cite[Proof of Lemma 4]{f} or \cite[Proof of Thm.~1.1]{c2}, it follows by \cite[Thm.~1.4 and Cor.~1.5]{a} that there exists a deformation of $(\G_{i-1})_{\varphi}$, over an integral base, whose general element is a rank two vector bundle $\G_i$. Then by semicontinuity (using either \cite[Prop.~6.4]{ha} or \cite[Thm.~0.3]{mu}) $\G_i$ satisfies (iii)-(v) of Lemma \ref{riduz}. By \eqref{ffi2} we have
\begin{equation}
\label{c1}
c_1(\G_i) = c_1((\G_{i-1})_{\varphi}) = c_1(\G_{i-1})  \equiv H+K_S
\end{equation}
so that $\G_i$ satisfies (i) and
\begin{equation}
\label{c2} 
c_2(\G_i) = c_2((\G_{i-1})_{\varphi}) = c_2(\G_{i-1}) + 1 = \frac{1}{2} H \cdot (H+K_S) + \chi + i
\end{equation}
and therefore, by Riemann-Roch
\begin{equation}
\label{c3}
h^0(\G_i) = \chi (\G_i) = 2\chi + \frac{1}{2} c_1(\G_i) \cdot (c_1(\G_i) - K_S) - c_2(\G_i) = \chi-i
\end{equation}
whence $\G_i$ also satisfies (ii) of Lemma \ref{riduz}.

Now also $h^0(\G_{i-1}(-2\eta)) = \chi-i+1 \ge 1$, whence, by Lemma \ref{riduz}, a general elementary transformation $(\G_{i-1}(-2\eta))_{\varphi}$ of $\G_{i-1}(-2\eta)$ also satisfies properties (i)-(v). Since $\Hom(\G_{i-1},\O_{P}) \cong \Hom(\G_{i-1}(-2\eta),\O_{P})$ we see that  $(\G_{i-1}(-2\eta))_{\varphi} \cong  (\G_{i-1})_{\varphi}(-2\eta)$, whence $\G_i(-2\eta)$ is a generalization of $(\G_{i-1}(-2\eta))_{\varphi}$. Again by semicontinuity $\G_i(-2\eta)$ satisfies (iii)-(v) of Lemma \ref{riduz}. By \eqref{c1} we have that $c_1(\G_i(-2\eta)) \equiv c_1(\G_i) \equiv H+K_S$ so that $\G_i(-2\eta)$ satisfies (i) of Lemma \ref{riduz} and, by \eqref{c2}, $c_2(\G_i(-2\eta)) = c_2(\G_i) = \frac{1}{2} H \cdot (H+K_S) + \chi + i$.
Also $h^0(\G_i(-2\eta)) = \chi (\G_i(-2\eta)) = \chi (\G_i) = \chi-i$, whence $\G_i(-2\eta))$ also satisfies (ii) of Lemma \ref{riduz}. This proves the claim.

Finally let us prove that $\E:= \G_{\chi}(H)$ is an Ulrich vector bundle. To this end observe that we have found, by (A), that 
$$H^0(\G_{\chi}) = H^0(\G_{\chi}(-2 \eta))=0,$$
by \eqref{c2} that 
$$c_2(\G_{\chi})  = \frac{1}{2} H \cdot (H+K_S) + 2 \chi(\O_S),$$
and by \eqref{c1} that 
$$c_1(\G_{\chi}) = c_1(\G_0) = c_1(\G_{\eta}) = H + K_S + 2 \eta.$$
Then 
$$c_1(\E) \cdot H = (c_1(\G_{\chi}) + 2H) \cdot H = 3H^2+H \cdot K_S$$
and 
$$c_2(\E) = c_2(\G_{\chi}) + c_1(\G_{\chi}) \cdot H + H^2 = \frac{5}{2} H^2 + \frac{3}{2} H \cdot K_S + 2 \chi(\O_S) = $$
$$\hskip .7cm = \frac{1}{2} (c_1(\E)^2 - c_1(\E) \cdot K_S) - 2(H^2-\chi(\O_S))$$
and therefore $\E$ satisfies the conditions (2.2) in \cite[Prop.~2.1]{c1}. Moreover notice that
$$H^0(\E(-H))=H^0(\G_{\chi})=0$$ 
and 
$$H^0(\E^*(2H+K_S))=H^0(\G_{\chi}^*(H+K_S))= H^0(\G_{\chi}(-c_1(\G_{\chi}))(H+K_S)) = H^0(\G_{\chi}(-2 \eta)) = 0$$ 
and we are done by \cite[Prop.~2.1]{c1}.

Furthermore, if $\chi \ge 1$, it follows by (C) that $\G_{\chi}$ is simple and then so is $\E$. If $\chi=0$ and $q(S) \ge 2$ (which implies that $p_g(S) \ge 1$) or if $\chi=0, q(S)=1, h^0(\O_S(-K_S))=0$ and $h^0(\O_S(H-K_S)) \le h^0(\O_S(H))$, then $\G_0$ is simple by Lemma \ref{e0}, whence so is $\E$.
\end{proof}
\renewcommand{\proofname}{Proof}

\renewcommand{\proofname}{Proof of Corollary \ref{main2}}
\begin{proof} 
We first prove that there exists a simple Ulrich vector bundle $\E_0$ on the minimal model $S_0$ of $S$. Let $H_0$ be a sufficiently ample divisor on $S_0$. If $q(S)=p_g(S)=0$ the existence of $\E_0$ follows by \cite[Thm.~1.2]{c1}, while if $q(S)=0$ and $p_g(S)\ge 1$ the existence of $\E_0$ follows by \cite[Thm.~1.1]{c2}. Observe that if $q(S) \ge 2$ and $\Pic(S_0) \cong \ZZ$, we have that $S_0$ is of maximal Albanese dimension, for otherwise any fiber the Albanese fibration will be a curve of self-intersection $0$, a contradiction. Therefore the existence of $\E_0$ when $q(S) \ge 1$ follows by Theorem \ref{main}. Given $\E_0$, just apply \cite[Thm.~0.1]{k} (see also \cite[Thm.~2]{s} for a more precise version) to get a simple Ulrich vector bundle $\E$ on $S$.
\end{proof}
\renewcommand{\proofname}{Proof}

\begin{remark}
\label{ipo}
Let $S$ be a smooth irreducible irregular surface and let $H$ be a very ample divisor on $S$. We have:
\begin{itemize}
\item [(i)] If $H-K_S$ is big and nef then (a) is satisfied;
\item [(ii)] The hypothesis $H^2(\O_S(H))=0$ holds, for example, whenever $S$ is not of general type or if $H^1(\O_S(H))=0$ and either $H^2 \ge H \cdot K_S$ or $\chi(\O_S) \le 5$. In particular we have that $H^1(\O_S(H))=0$ and (b) imply $H^2(\O_S(H))=0$;
\item [(iii)] If $H^1(\O_S(H))=0$ and $p_g(S) \le 5$ then (b) is satisfied;
\item [(iv)] If $S$ is not of general type or if $H^2 > 2H \cdot K_S$, then (c) is satisfied;
\item [(v)] If $H$ is sufficiently ample then (a), (b) and (c) are satisfied.
\end{itemize}
\end{remark}
\begin{proof} 
If $H-K_S$ is big and nef then $H^i(\O_S(H))=H^i(\O_S(K_S+(H-K_S)))=0$ for $i \ge 1$ by Kawamata-Viehweg vanishing and (a) is satisfied. This gives (i). If $S$ is not of general type, then $K_S$ is not big, whence $H^0(\O_S(2K_S-H))=0$ and $h^2(\O_S(H)) = h^0(\O_S(K_S-H))=0$, giving the first part of (ii) and of (iv). If $H^2 > 2H \cdot K_S$ then $H^0(\O_S(2K_S-H))=0$, and we get (iv). To see the rest of (ii) suppose that $H^2(\O_S(H)) \neq 0$, so that $H^0(\O_S(K_S-H)) \neq 0$, giving $H \cdot (K_S - H) \ge 0$. If $H^2 \ge H \cdot K_S$ we have that $H \cdot (K_S - H)=0$ and then $K_S \sim H$, giving the contradiction $0 = h^1(\O_S(H)) = h^1(\O_S(K_S)) = q(S)$. Since $H$ is very ample and $S$ is irregular we have that $h^0(\O_S(H)) \ge 5$, whence, by Riemann-Roch,
$$H^2 - H \cdot K_S \ge 2h^0(\O_S(H)) - 2 \chi(\O_S) \ge 10 - 2 \chi(\O_S).$$
If $\chi(\O_S) \le 5$ we get again $H^2(\O_S(H)) = 0$. This gives (ii). If $p_g(S) \le 5$ we get (b), whence (iii). To see (v) note that if $H = m A$ with $A$ ample and $m \gg 0$ then (a) and (b) hold trivially, and so does (c) since we can assume that $H \cdot (2K_S-H)<0$.
\end{proof}

\begin{remark}
\label{ipo4}
One may wonder if there are other surfaces, aside from surfaces of maximal Albanese dimension or $q(S)=1$, such that $H^1(\eta) = 0$ for a general $\eta \in Pic^0(S)$. The answer is no.
\end{remark}
\begin{proof} 
Suppose that such an $\eta$ exists but $S$ is not of maximal Albanese dimension. By \cite[Prop.~V.15]{b4} the Albanese map $a$ has connected fibers and has image a smooth curve $B$ of genus $q(S)$. Now $a^* : \Pic^0(B) \to \Pic^0(S)$ is injective since $a_*\O_S \cong \O_B$ and $\dim \Pic^0(B) = \dim \Pic^0(S) = q(S)$, whence $a^*$ is surjective. Therefore there exists $\eta_0 \in \Pic^0(B)$ such that $\eta = a^* \eta_0$. Since $\eta \not\cong \O_S$ (otherwise $H^1(\eta)=H^1(\O_S)=0$), we find that $\eta_0 \not\cong \O_B$ and then $h^0(\eta_0)=0$. By Riemann-Roch this gives $h^1(\eta_0)=q(S)-1$. Now the Leray spectral sequence gives the exact sequence
$$0 \to H^1(a_*\eta) \to H^1(\eta) \to H^0(R^1a_*\eta) \to 0$$
whence that $H^1(\eta_0)=H^1(a_*\eta)=0$ and therefore $q(S)=1$.
\end{proof}

\begin{remark}
\label{ipo2}
It is not difficult to see that the hypothesis $h^0(\O_S(-K_S)) = 0$ and $h^0(\O_S(H-K_S)) \le h^0(\O_S(H))$ in Theorem \ref{main} can be replaced by $h^0(\O_S(-K_S-H))=0, h^0(\O_S(-K_S)) \le h^0(\O_S(H))$ and $h^0(\O_C(H-K_S)) \le 2h^0(\O_S(H))$ for a smooth $C \in |H|$.
\end{remark}

\begin{remark}
\label{ipo3}
Using the same method in Theorem \ref{main} it follows that if $S$ is a regular surface and $H$ is a very ample non-special divisor on $S$ with $H \neq K_S, H^2 \ge H \cdot K_S$, satisfying (c) and either $h^0(\O_S(-K_S)) = 0, h^0(\O_S(H-K_S)) \le h^0(\O_S(H))$ or $h^0(\O_S(-K_S-H))=0, h^0(\O_S(-K_S)) \le h^0(\O_S(H))$ and $h^0(\O_C(H-K_S)) \le 2h^0(\O_S(H))$ for a smooth $C \in |H|$, then there exists a rank two Ulrich simple vector bundle $\E$ for the pair $(S,H)$. This does not give anything new because, since $H$ is non-special, rank two simple Ulrich vector bundles are known to exist when $p_g(S)=0$ by \cite[Thm.~1.1]{c1} and when $p_g(S) \ge 1$ by \cite[Thm.~1.1]{c2} as soon as $H^2 \ge H \cdot K_S-4$ and (c) holds.
\end{remark}

\section{Examples}

We give in this section some examples of pairs $(S,H)$ satisfying the hypotheses of Theorem \ref{main}. 

\subsection{Surfaces of maximal Albanese dimension}

Let $S$ be a surface of maximal Albanese dimension and let $H$ be a very ample divisor on $S$. 

By Remark \ref{ipo} we see that all the hypotheses of Theorem \ref{main} are satisfied if either $H$ is sufficiently ample or $H$ is non-special, $p_g(S) \le 5$ and $S$ is not of general type. Hence, under the above hypotheses on $H$, there exists a simple rank two Ulrich vector bundle $\E$ for the pair $(S,H)$. 

\subsection{Weierstrass fibrations}
\label{wei}

Let $E$ be an elliptic curve and let $\pi: S \to E$ be a Weierstrass fibration (see \cite{mi}) and let $H$ be a very ample divisor on $S$. 

Recall $\pi$ has a section $C$ with $C^2 = - n \le 0$. We will suppose that $n \ge 1$. We have $q(S)=1, \chi(\O_S)= n$ and $K_S \equiv nf$, where $f$ is a general fiber. In particular $S$ is not of general type. Assume that $H \equiv a C + bf$. It follows by \cite[Lemma 3.9(i) and (vi)]{lms} that $a \ge 3, b \ge an+1$ and $H^1(\O_S(H))=0$. Also $H^2 - H \cdot K_S = a(2b-an-n) \ge 0$ so that (b) holds and so does (a) by Remark \ref{ipo}(ii). Also (c) is satisfied by Remark \ref{ipo}(iv). Hence for all $H \equiv a C + bf$ there exists a simple rank two Ulrich vector bundle $\E$ for the pair $(S,H)$.

In the case of regular Weierstrass fibrations $\pi: S \to \PP^1$ with $n \ge 1$ it follows by \cite[Lemma 3.9(i), (v) and (vi)]{lms} that $H \equiv a C + bf$ is very ample if and only if $a \ge 3$ and $b \ge an+1$ and also that $H$ is non-special. The existence of a simple rank two Ulrich vector bundle with $H \equiv a C + bf$ was proved under some conditions on $b$ in \cite[Thm.~4]{mp} and follows in general by \cite[Thm.~1.1]{c1} for $n=1$ and by \cite[Thm.~1.1]{c2} for $n \ge 2$. 

\subsection{Surfaces geometrically ruled over an elliptic curve}

Let $S$ be a geometrically ruled surface over an elliptic curve and let $H$ be a very ample divisor on $S$. 

First $q(S)=1$ and $\chi(\O_S)=0$. We have a map $\pi: S \cong \PP(\E) \to E$ for a rank two vector bundle $\E$ on an elliptic curve $E$. We assume that $\E$ is normalized \cite[Not.~V.2.8.1]{h} with invariant $e$, so that, by \cite[Thm.~V.2.12 and Thm.~V.2.15]{h}, all cases $e \ge -1$ occur. Let $C_0$ be a section and $f$ be a fiber. By \cite[Prop.~V.2.20 and V.2.21]{h} we have that $H \equiv aC_0+bf$ with $a \ge 1$ and either $e \ge 0, b > ae$ or $e=-1, b > - \frac{a}{2}$. Now $H-K_S \equiv (a+2)C_0+(b+e)f$ and, using \cite[Prop.~V.2.20 and V.2.21]{h}, it is easy to see that either $e \le 0$ and $H-K_S$ is ample, and therefore (a) holds by Remark \ref{ipo}(i), or $e \ge 1$. In the latter case $\E$ is decomposable by \cite[Thm.~V.2.15]{h} and it is easily verified that $H^1(\O_S(H)) = 0$. Hence, by Remark \ref{ipo}(ii), we see that in all cases (a) of Theorem \ref{main} holds. Hence there exists a rank two Ulrich vector bundle $\E$ for the pair $(S,H)$.

The existence of rank two Ulrich vector bundles in this case was previously known, in the case $e>0$, by \cite[Prop.~3.1, Prop.~3.3 and Thm.~3.4]{acm} and in all cases by \cite[Cor.~3.3]{c4} (observing that, as above, one always has that $H^1(\O_S(H)) = 0$).

\subsection{Bielliptic surfaces}

Let $S$ be a bielliptic surface and let $H$ be a very ample divisor on $S$. 

Then $q(S) = 1, \chi(\O_S)=0$ and $K_S \equiv 0$. Since $H-K_S$ is ample we see by Remark \ref{ipo}(i) that (a) of Theorem \ref{main} is satisfied. As a matter of fact the Ulrich vector bundle obtained is the same as the one in \cite[Cor.~3.3]{c4} and essentially the same as the one in \cite[Prop.~6]{b1}.

\subsection{Surfaces with $p_g=0, q = 1$}

Let $S$ be a surface $p_g(S)=0, q(S) = 1$ and let $H$ be a very ample divisor on $S$. 

Since $\chi(\O_S)=0$ we have that $S$ is not of general type whence (a) of Theorem \ref{main} holds by Remark \ref{ipo} as soon as either $S$ is minimal non-ruled (because then $K_S \equiv 0$ by \cite[Prop.~VI.15 and Appendix A]{b4}) or $H$ is non-special. As a matter of fact the Ulrich vector bundle obtained is the same as the one in \cite[Cor.~3.3]{c4}.


\begin{thebibliography}{CKM}

\bibitem[A]{a} I.~V.~Artamkin.
\textit{Deforming of torsion-free sheaves on an algebraic surface}. 
Izv. Akad. Nauk SSSR Ser. Mat. \textbf{54} (1990), no. 3, 435-468; translation in Math. USSR-Izv. \textbf{36} (1991), no. 3, 449-485.

\bibitem[ACM]{acm} M.~ Aprodu, L.~Costa, R.~M.~Miró-Roig.
\textit{Ulrich bundles on ruled surfaces}. 
J. Pure Appl. Algebra \textbf{222} (2018), no. 1, 131-138.

\bibitem[AFO]{afo} M.~ Aprodu, G.~Farkas, A.~Ortega.
\textit{Minimal resolutions, Chow forms and Ulrich bundles on $K3$ surfaces}.
J. Reine Angew. Math. \textbf{730} (2017), 225-249.
 
\bibitem[B1]{b1} A.~Beauville.
\textit{An introduction to Ulrich bundles}. 
Eur. J. Math. \textbf{4} (2018), no. 1, 26-36.

\bibitem[B2]{b2} A.~Beauville.
\textit{Ulrich bundles on abelian surfaces}. 
Proc. Amer. Math. Soc. \textbf{144} (2016), no. 11, 4609-4611. 

\bibitem[B3]{b3} A.~Beauville.
\textit{Ulrich bundles on surfaces with $p_g=q=0$}. 
Preprint arXiv:1607.00895 [math.AG]. 

\bibitem[B4]{b4} A.~Beauville.
\textit{Complex algebraic surfaces}. 
London Mathematical Society Student Texts, \textbf{34}. Cambridge University Press, Cambridge, 1996.

\bibitem[BN]{bn} L.~Borisov, H.~Nuer.
\textit{Ulrich bundles on Enriques surfaces}. 
Int. Math. Res. Not. IMRN \textbf{2018}, no. 13, 4171-4189. 

\bibitem[C1]{c1} G.~Casnati.
\textit{Special Ulrich bundles on non-special surfaces with $p_g=q=0$}. 
Internat. J. Math. \textbf{28} (2017), no. 8, 1750061, 18 pp.

\bibitem[C1e]{c1e} G.~Casnati.
\textit{Erratum: "Special Ulrich bundles on non-special surfaces with $p_g=q=0$''}. 
Internat. J. Math. \textbf{29} (2018), no. 5, 1892001, 3 pp.

\bibitem[C2]{c2} G.~Casnati.
\textit{Special Ulrich bundles on regular surfaces with non-negative Kodaira dimension}. 
Preprint arXiv:1809.08565 [math.AG].

\bibitem[C3]{c3} G.~Casnati.
\textit{Rank two stable Ulrich bundles on anticanonically embedded surfaces}. 
Bull. Aust. Math. Soc. \textbf{95} (2017), no. 1, 22-37.

\bibitem[C4]{c4} G.~Casnati.
\textit{Ulrich bundles on non-special surfaces with $p_g=0$ and $q=1$}. 
Rev. Mat. Complut. \textbf{32} (2019), no. 2, 559-574.

\bibitem[C4e]{c4e} G.~Casnati.
\textit{Correction to: Ulrich bundles on non-special surfaces with $p_g=0$ and $q=1$}. 
Rev. Mat. Complut. \textbf{32} (2019), no. 2, 575-577.

\bibitem[CaHa]{ch} M.~Casanellas, R.~Hartshorne.
\textit{Stable Ulrich bundles}. With an appendix by F.~Geiss, F.-O.~Schreyer.
Internat. J. Math. \textbf{23} (2012), no. 8, 1250083, 50 pp. 

\bibitem[CKM]{ckm} E.~Coskun, R.~S.~Kulkarni, Y.~Mustopa.
\textit{Pfaffian quartic surfaces and representations of Clifford algebras}. 
Doc. Math. \textbf{17} (2012), 1003-1028.

\bibitem[CoHu]{cohu} E.~Coskun, J.~Huizenga.
\textit{Brill-Noether Theorems, Ulrich bundles and the cohomology of moduli spaces of sheaves}. 
Preliminary version, for the Proceedings of the ICM Satellite Conference on Moduli Spaces in Algebraic Geometry and Applications.
Available at author's webpage.
 
\bibitem[ES]{es} D.~Eisenbud, F.-O.~Schreyer.
\textit{Resultants and Chow forms via exterior syzygies}. 
J. Amer. Math. Soc. \textbf{16} (2003), no. 3, 537-579.
 
\bibitem[F]{f} D.~Faenzi.
\textit{Ulrich bundles on K3 surfaces}. 
Algebra Number Theory \textbf{13} (2019), no. 6, 1443-1454. 

\bibitem[GL]{gl} M.~Green, R.~Lazarsfeld.
\textit{Deformation theory, generic vanishing theorems, and some conjectures of Enriques, Catanese and Beauville}. 
Invent. Math. \textbf{90} (1987), no. 2, 389-407.

\bibitem[K]{k} Y.~ Kim.
\textit{Ulrich bundles on blowing ups}. 
C. R. Math. Acad. Sci. Paris \textbf{354} (2016), no. 12, 1215-1218.

\bibitem[KMM]{kmm} Y.~Kawamata, K.~Matsuda, K.~Matsuki.
\textit{Introduction to the minimal model problem}. 
Algebraic geometry, Sendai, 1985, 283–360, Adv. Stud. Pure Math., 10, North-Holland, Amsterdam, 1987.

\bibitem[H]{h} R.~Hartshorne.
\textit{Algebraic geometry}. 
Graduate Texts in Mathematics, No. 52. Springer-Verlag, New York-Heidelberg, 1977.

\bibitem[Ha]{ha} H.~Hartmann.
\textit{Cusps of the K\"ahler moduli space and stability conditions on K3 surfaces}. 
Math. Ann. \textbf{354} (2012), no. 1, 1-42. 

\bibitem[Hc]{hc} C.~D.~Hacon.
\textit{A derived category approach to generic vanishing}. 
J. Reine Angew. Math. \textbf{575} (2004), 173-187.

\bibitem[HL]{hl} D.~Huybrechts, M.~Lehn.
\textit{The geometry of moduli spaces of sheaves}. 
Cambridge Mathematical Library. Cambridge University Press, Cambridge, 2010.

\bibitem[HUB]{hub} J.~Herzog, B.~Ulrich,J.~Backelin.
\textit{Linear maximal Cohen-Macaulay modules over strict complete intersections}. 
J. Pure Appl. Algebra \textbf{71} (1991), no. 2-3, 187-202.

\bibitem[LMS]{lms} A.~F.~Lopez, R.~Mu\~{n}oz, J.~C.~Sierra.
\textit{On the extendability of elliptic surfaces of rank two and higher}. 
Ann. Inst. Fourier (Grenoble) \textbf{59} (2009), no. 1, 311-346. 
 
\bibitem[Mi]{mi} R.~Miranda. 
\textit{The basic theory of elliptic surfaces}. 
Dottorato di Ricerca in Matematica. ETS Editrice, Pisa, 1989.

\bibitem[MLP]{pml} M.~Mendes Lopes, R.~Pardini. 
\textit{The geography of irregular surfaces}. 
Current developments in algebraic geometry, 349-378, Math. Sci. Res. Inst. Publ., \textbf{59}, Cambridge Univ. Press, Cambridge, 2012.

\bibitem[MP]{mp} R.M.~Mir\'o-Roig, J.~Pons-Llopis.
\textit{Special Ulrich bundles on regular Weierstrass fibrations}. 
Math. Z. \textbf{293} (2019), no. 3-4, 1431-1441.

\bibitem[Mu]{mu} S.~Mukai.
\textit{Symplectic structure of the moduli space of sheaves on an abelian or $K3$ surface}. 
Invent. Math. \textbf{77} (1984), no. 1, 101-116.


\bibitem[PT]{pt} J.~Pons-Llopis, F.~Tonini.
\textit{ACM bundles on del Pezzo surfaces}. 
Matematiche (Catania) \textbf{64} (2009), no. 2, 177-211.

\bibitem[S]{s} S.~A.~Secci.
\textit{On the existence of Ulrich bundles on blown-up varieties at a point}. 
Boll. Unione Mat. Ital. \textbf{13} (2020), no. 1, 131-135.

\end{thebibliography}
\end{document}